\documentclass[11pt,a4paper,reqno]{amsart}
\usepackage[utf8]{inputenc}
\usepackage{enumitem}
\usepackage{amsmath}
\usepackage{amsfonts}
\usepackage{amssymb}
\usepackage[all]{xy}
\usepackage{xfrac}
\usepackage[yyyymmdd,hhmmss]{datetime}
\usepackage{centernot}
\usepackage[english]{babel}
\usepackage{dsfont}
\usepackage{bbold}
\usepackage{soul}
\usepackage{lmodern}

\theoremstyle{plain}
\newtheorem{theorem}{Theorem}
\newtheorem{proposition}{Proposition}
\newtheorem{lemma}{Lemma}

\theoremstyle{definition}

\author{J. P. Fatelo and N. Martins-Ferreira}

\address{School of Technology and Management, Centre for Rapid and Sustainable Product Development - CDRSP, Polytechnic Institute of Leiria, P-2411-901 Leiria, Portugal.}

\email{jorge.fatelo@ipleiria.pt}

\email{martins.ferreira@ipleiria.pt}

\title[]{A refinement of ternary Boolean algebras}


\subjclass[2020]{06E05, 06D30, 03G25, 16Y30}

\keywords{Boolean algebra, ternary Boolean algebra, non-symmetric ternary operation, rings of characteristic two, near rings.}

\thanks{  This work is supported by Fundação para a Ciência e a Tecnologia (FCT\-UID-Multi-04044-2019), Centro2020 (PAMI -- ROTEIRO\-/0328\-/2013\-- 022158) and Polytechnic of Leiria through the projects CENTRO\--01\--0247-FEDER: 069665, 069603, 039958, 039969, 039863, 024533 and also ESTG and CDRSP}

\begin{document}

\begin{abstract}
An algebraic structure with two constants and one ternary operation, which is not completely commutative, is put forward to accommodate ternary Boolean algebras. When the ternary operation is interpreted as Church's conditioned disjunction, Boolean algebras are characterized as a subvariety. Different interpretations for the ternary operation lead to distinct subvarieties. Rings and near-rings of characteristic 2 are used to illustrate the procedure.
\end{abstract}

\maketitle



Ternary Boolean algebras \cite{Grau} were introduced by Grau in 1947 to axiomatize Boolean algebras by means of the ternary operation $xy\circ yz\circ zx$ (see also~\cite{Pad}). In the same year this operation was used independently by Birkhoff and Kiss~\cite{Birk} to characterize distributive lattices. Both approaches are particular cases of median algebras \cite{Bandelt, Isbell, Sholander}.  A different characterization  by Whiteman~\cite{Whiteman} uses ternary rejection $\bar{x}\bar{y}\circ \bar{y}\bar{z}\circ \bar{z}\bar{x}$ instead. Although the set of axioms  is distinct in each case, complete commutativity is a common feature. 
In 1948, Church \cite{ChurchPM} shows that it is possible to axiomatize Boolean algebras in terms of the conditioned disjunction $\bar{y}x\circ yz$ which is not completely commutative (see also \cite{hoare}). 
 As observed in 1949 by Jordan \cite{Jordan}
\emph{the theory of groups would retain relatively little of its charms and richness if it were restricted to the consideration of Abelian groups. It is therefore reasonable to assume that in the theory of lattices as well, dispensing with the commutative law could lead to a considerable broadening of the horizon}.
 A systematic study of non-commutative Boolean algebras and skew lattices has been reinforced by Leech \cite{Lee90,Lee96}. Although skew lattices are not necessarily commutative, there are good reasons why they should be idempotent \cite{Jordan} (see also \cite{Lee89,Lee19}). 
 However, this is too restrictive if we  wish to generalise Boolean algebras from Boolean rings (see \cite{Salibra3} for more recent generalisations).



In this note, we propose a system $(A,p,0,1)$ with two constants $0$ and~$1$ and a ternary operation $p$ satisfying conditions \ref{C1}--\ref{C4}, presented in Lemma~\ref{lemma:1}, from which
an involution $\bar{x}=p(1,x,0)$ and associative binary operations $x\cdot y=p(0,x,y)$, $x\circ y=p(x,y,1)$ and $x+y=p(x,y,\bar{x})$ are derived  (see also \cite{hoare}). Every formula specifying $p$ in terms of its derived operations is equivalent to adding a new axiom and hence it gives rise to a subvariety.
For example, Boolean algebras form a subvariety if and only if $p(x,y,z)$ is interpreted as Church's conditioned disjunction $\bar{y}x\circ yz$ 
(Theorem~\ref{thm:1}). 
Moreover, if specifying $p(x,y,z)$ as $\bar y x+yz$ or $(1+y)x+yz$ then the subvariety of unitary rings with characteristic~2 is obtained (Theorem~\ref{thm:2}). While if \mbox{$p(x,y,z)$ is $x+y(x+z)$} then the result is the subvariety of unitary Abelian (right) near-rings with characteristic $2$ (Theorem \ref{thm:3}). 
Note that the previous formulas for~$p$ are all equal in the context of Boolean algebras or Boolean rings and are related to the affine formula 
$x+y(z-x)$ used in Proposition \ref{prop}.


Lemma \ref{lemma:1} presents the algebraic structure $(A,p,0,1)$ satisfying axioms \ref{C1}--\ref{C4} and gives the properties needed for the upcoming theorems. Axiom~\ref{C4} alone defines a Church algebra~\cite{Salibra2,Salibra1} while a Menger algebra of rank 2 (see e.g. \cite{menger}) uses axiom \ref{C3}.
The axioms \ref{C1}, \ref{C3} and \ref{C4} have been used to define  proposition algebras \cite{Ponse} while a generalisation of axioms \ref{C2}--\ref{C4} has been used to study spaces with geodesic paths~\cite{mobi,affine,mobi2sphere}.
\begin{lemma}\label{lemma:1}
Let $(A,p,0,1)$ be a system consisting of a set $A$, together with a ternary operation $p$ and two constants $0,1\in A$ satisfying:
\begin{enumerate}[label={\bf (T\arabic*)}]
\item\label{C1}\label{smA2} $p(0,a,1)=a$
\item\label{C2}\label{smA3} $p(a,b,a)=a$
\item\label{C3}\label{smA7} $p(a,p(b_1,b_2,b_3),c)=
p(p(a,b_1,c),b_2,p(a,b_3,c))$
\item\label{C4}\label{smA4}\label{smA5} $p(a,0,b)=a=p(b,1,a)$.
\end{enumerate}
For $\bar{a}=p(1,a,0)$, $ a\cdot b=p(0,a,b)$, $ a\circ b=p(a,b,1)$ and $a+b=p(a,b,\bar a)$, the following properties hold:
\begin{enumerate}[label={\rm(L\arabic*)}]
\item $\bar{1}=0,\quad \bar{0}=1$ \quad and \quad $\overline{\overline{a}}=a$
\item $\overline{p(a,b,c)}=p(\overline{a},b,\overline{c}), \quad p(a,b,c)=p(c,\bar{b},a)$
\item\label{P0} $\overline{a\cdot b}=\overline{b}\circ \overline{a},\quad\overline{a\circ b}=\overline{b}\cdot \overline{a},\quad 
\overline{a+b}=\bar a+b=a+\bar b$
\item\label{monoids} $(A,\cdot,1)$, $(A,\circ,0)$ and $(A,+,0)$ are monoids
\item\label{absorb} $a\cdot 0=0 =0\cdot a,\quad a\circ 1=1=1\circ a$
\item\label{P4-0} $a+1=\bar a=1+a,\quad 1+1=0$
\end{enumerate}
Furthermore:
\begin{enumerate}[label={\rm(L\arabic*)},resume]
\item\label{P3} if, for all $a,b\in A$, $p(0,a,b)=p(a,a,b)$ then the systems $(A,\cdot,\bar{()},0)$ and $(A,\circ,\bar{()},1)$ are commutative magmas with $a\cdot \bar{a}=0$ and $a\circ \bar{a}=1$;
\item\label{P1} if $\bar{a}\cdot a=0$ then $a\cdot a=a$; if $\bar{a}\circ a=1$ then $a\circ a=a$;
\item\label{P2} if either $(A,\cdot,1)$ or $(A,\circ,0)$ is a commutative and idempotent  monoid then $(A,\circ,\cdot,0,1)$ is a bounded distributive lattice;
\item\label{P4} if $a+a=0$ then $+$ is commutative and $(a+b)\cdot c=a\cdot c+b\cdot c$.
\end{enumerate}
\end{lemma}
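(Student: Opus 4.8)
The plan is to derive all of (L1)--(L10) by direct manipulation of the four axioms \ref{C1}--\ref{C4}, treating \ref{C3} as a distributive law. Two uses of \ref{C3} carry most of the weight: read left to right it pushes a $p$-term occupying the \emph{middle} slot of an outer $p$-term outward, and read right to left it collapses two outer slots of the form $p(x,y_1,z)$ and $p(x,y_3,z)$ (same $x$, same $z$) into a single inner occurrence $p(x,p(y_1,\cdot\,,y_3),z)$. I would prove the properties in the stated order, each relying on the earlier ones.

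For (L1): $\bar 1=p(1,1,0)=0$ and $\bar 0=p(1,0,0)=1$ are instances of \ref{C4}, and $\overline{\overline a}=p(1,p(1,a,0),0)\stackrel{\ref{C3}}{=}p(p(1,1,0),a,p(1,0,0))=p(0,a,1)=a$ by \ref{C4} and \ref{C1}. For (L2): $\overline{p(a,b,c)}=p(1,p(a,b,c),0)\stackrel{\ref{C3}}{=}p(p(1,a,0),b,p(1,c,0))=p(\bar a,b,\bar c)$, and, writing $\bar b=p(1,b,0)$, $p(c,\bar b,a)=p(c,p(1,b,0),a)\stackrel{\ref{C3}}{=}p(p(c,1,a),b,p(c,0,a))=p(a,b,c)$ by \ref{C4}. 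Then (L3) is immediate from the definitions together with (L1)--(L2); for instance $\overline{a\cdot b}=\overline{p(0,a,b)}=p(1,a,\bar b)=p(\bar b,\bar a,1)=\bar b\circ\bar a$. The identity and absorbing elements in (L4)--(L6) come straight from \ref{C1}, \ref{C2}, \ref{C4}; for associativity of $\cdot$, $(a\cdot b)\cdot c=p(0,p(0,a,b),c)\stackrel{\ref{C3}}{=}p(p(0,0,c),a,p(0,b,c))=p(0,a,b\cdot c)$; associativity of $\circ$ then transports through the involution using $\overline{a\circ b}=\bar b\cdot\bar a$ and associativity of $\cdot$; and for $+$, writing $a+b=p(a,b,\bar a)$ and (by (L3)) $\overline{a+b}=a+\bar b=p(a,\bar b,\bar a)$, one gets $(a+b)+c=p(p(a,b,\bar a),c,p(a,\bar b,\bar a))$, which by the right-to-left reading of \ref{C3} equals $p(a,p(b,c,\bar b),\bar a)=a+(b+c)$.

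For the conditional statements: in (L7) the hypothesis yields $p(0,a,a)=p(a,a,a)=a$ by \ref{C2}, so $a\cdot a=a$, while $a\cdot\bar a=p(0,a,\bar a)=p(\bar a,\bar a,0)=\bar a\cdot 0=0$ using (L2), the hypothesis, and (L5); applying the involution and (L3) gives $a\circ\bar a=1$. One further notes $a+a=p(a,a,\bar a)=p(0,a,\bar a)=a\cdot\bar a=0$, so (L10) applies under the hypothesis of (L7). For (L9), once one of $\cdot,\circ$ is a commutative idempotent monoid the other one is too, by transport through the involution ($\overline{a\circ b}=\bar b\cdot\bar a=\bar a\cdot\bar b=\overline{b\circ a}$, and $\overline{a\circ a}=\bar a\cdot\bar a=\bar a$), with bounds supplied by (L5); the absorption and distributive laws are then extracted from \ref{C3} (for example $a\circ(a\cdot b)=p(a,p(0,a,b),1)\stackrel{\ref{C3}}{=}p(p(a,0,1),a,p(a,b,1))=p(a,a,a\circ b)$). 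For (L10), from $a+a=0$ the monoid $(A,+,0)$ is a group in which every element is its own inverse, hence abelian; the one-sided distributive law is then obtained by expanding $(a+b)\cdot c=p(0,p(a,b,\bar a),c)\stackrel{\ref{C3}}{=}p(a\cdot c,b,\bar a\cdot c)$ and matching this against $a\cdot c+b\cdot c$.

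The delicate points are the commutativity assertions in (L7) (equivalently in (L9)), the two implications in (L8), and the final identification inside the distributive law of (L10): the axioms \ref{C1}, \ref{C2}, \ref{C4} are "projection" identities which, combined with \ref{C3}, merely reproduce any term substituted into the middle slot, so the extra hypothesis must be fed in through a non-obvious substitution rather than a mechanical expansion. The device I expect to be productive is precisely the right-to-left use of \ref{C3}: arrange the two outer slots of a $p$-term to carry matching compounds $p(x,y_1,z)$, $p(x,y_3,z)$ with $x$ and $z$ chosen --- using the relevant hypothesis --- so that the interpolation $p(x,-\,,z)$ is \emph{not} one of the trivial projections $p(0,-\,,1)$ or $p(1,-\,,0)$, and then collapse. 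Establishing that such a choice exists and produces the required commutativity and distributivity is the main work; everything else is bookkeeping with (L1)--(L6).
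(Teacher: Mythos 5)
Your derivations of (L1)--(L6) are correct and efficient (the treatment of associativity of $+$ via $(a+b)+c=p(p(a,b,\bar a),c,p(a,\bar b,\bar a))=p(a,p(b,c,\bar b),\bar a)$ is exactly the right use of (T3) read right to left), and so are the pieces of (L7) you actually compute ($a\cdot a=a$, $a\cdot\bar a=0$, $a\circ\bar a=1$, $a+a=0$) and the commutativity of $+$ in (L10). The paper itself gives no proof of this lemma (it defers entirely to the companion preprint), so there is no argument of the authors' to compare against; the issue is internal to your proposal. You explicitly flag the commutativity claims in (L7), both implications of (L8), the absorption/distributivity in (L9), and the right distributive law in (L10) as ``the main work,'' and then you do not do that work: you only describe the shape of the substitution you expect to use. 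A proof that ends by saying the productive device is ``a non-obvious substitution'' whose existence remains to be established is a plan, not a proof, and these are precisely the statements that do not follow by bookkeeping from (L1)--(L6).

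To make the gap concrete: (L8) does yield to the device you describe, but you have to exhibit the substitution --- e.g.\ from $\bar a\cdot a=0$ one gets $\bar a\circ a=\overline{\bar a\cdot a}=\bar 0=1$ by (L3), and then $a\cdot a=p(0,a,a)=p\bigl(p(0,\bar a,a),a,p(0,1,a)\bigr)=p\bigl(0,p(\bar a,a,1),a\bigr)=(\bar a\circ a)\cdot a=1\cdot a=a$; nothing in your text produces this. For (L7) you never address commutativity of $\cdot$ at all, even though it is the content of the claim (idempotency, which you do prove, is not asserted there). In (L9) your sample computation stops at $a\circ(a\cdot b)=p(a,a,a\circ b)$, which is not yet the absorption law and cannot be closed by (T2) or (T4) alone. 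In (L10) the expansion $(a+b)\cdot c=p(a\cdot c,b,\bar a\cdot c)$ must be compared with $a\cdot c+b\cdot c=p(a\cdot c,b\cdot c,\overline{a\cdot c})$, two terms differing in both the middle and the last argument; ``matching'' them requires a further (T3)-manipulation (for instance showing $a\cdot c+(a+b)\cdot c=b\cdot(a\cdot c+\bar a\cdot c)=b\cdot c$ and cancelling in the elementary abelian group $(A,+)$), none of which is supplied. Until these substitutions are written out, the conditional clauses (L7)--(L10) remain unproved.
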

\begin{proof}
The results are readily obtained from the axioms. Explicit proofs can be found in \cite{preprint-TBA}.
\end{proof}

The following theorem is a refinement of Grau's  ternary Boolean algebra in the sense that it uses Church's  operation and a systematisation of Hoare's axioms considered in \cite{hoare}.

\begin{theorem}\label{thm:1}
Suppose that $(A,p,0,1)$ satisfies axioms \ref{C1} to \ref{C4}. For $\bar{a}=p(1,a,0)$, $ a\cdot b=p(0,a,b)$, $ a\circ b=p(a,b,1)$ and $a+b=p(a,b,\bar a)$, the following conditions are equivalent:
\begin{enumerate}[label={\rm(\roman*)}]
\item\label{T1} $(A,\circ,\cdot,\bar{()},0,1)$ is a Boolean algebra
\item\label{T0} $(A,+,\cdot,0,1)$ is a Boolean ring
\item\label{T2} $p(a,b,c)=(\bar{b}\cdot a)\circ (b\cdot c)$
\item\label{T3} $p(a,a,b)=a\cdot b$
\item\label{T4} $p(a,b,b)=a\circ b$.
\end{enumerate}
\end{theorem}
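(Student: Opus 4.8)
The plan is to prove the cycle of implications $\ref{T1} \Rightarrow \ref{T2} \Rightarrow \ref{T3} \Rightarrow \ref{T4} \Rightarrow \ref{T1}$, with a detour establishing $\ref{T1} \Leftrightarrow \ref{T0}$ as a known equivalence between Boolean algebras and Boolean rings that can be invoked at the end (or slotted in wherever convenient; since the derived operations $\circ,\cdot,\bar{()},+$ are exactly the classical ones, $x+y=(x\wedge\bar y)\vee(\bar x\wedge y)$ and $x\vee y=x+y+xy$, this is standard). The backbone is thus the ternary-to-binary translation, and the main work is showing that Church's conditioned disjunction formula in \ref{T2} is forced by the Boolean structure and conversely forces everything back.

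For $\ref{T1}\Rightarrow\ref{T2}$: assuming $(A,\circ,\cdot,\bar{()},0,1)$ is a Boolean algebra, I would verify that $q(a,b,c):=(\bar b\cdot a)\circ(b\cdot c)$ satisfies axioms \ref{C1}--\ref{C4}, so by uniqueness of the term operation it must coincide with $p$. Actually the cleaner route: both $p$ and $q$ satisfy \ref{C1}--\ref{C4}, and in a Boolean algebra one checks directly from \ref{C1}--\ref{C4} plus the lattice identities that any such ternary operation is determined; e.g. using \ref{C3} to expand $p(a,b,c)$ via $b = \bar b\cdot 0 \vee \ldots$ — more simply, write $b = p(0,b,1)$ is not helpful, but $p(a,b,c) = p(a, p(0,b,1), c) \stackrel{\ref{C3}}{=} p(p(a,0,c), b, p(a,1,c)) \stackrel{\ref{C4}}{=} p(a,b,c)$, tautology; instead use that in a Boolean algebra $b$ has the property $z = (\bar b \wedge z_0)\vee(b\wedge z_1)$ is the unique solution form, and apply \ref{C3} to $p(a, p(\bar b, 0, \ldots))$. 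The honest statement is: expand $p(a,b,c)$ by substituting a disjunctive normal form for the middle argument. The identity $p(a,b,c) = p(a, (b\cdot 1)\circ(\bar b \cdot 0), c)$ isn't directly of the form needed, but $p(a,b,c)$ with $b$ replaced using \ref{C1}-type reasoning: since we want to isolate the $b$-slot, note $p(a,b,c)=\overline{p(\bar a,\bar b,\bar c)}$? No — use (L2): $p(a,b,c)=p(c,\bar b,a)$, not obviously simplifying. I will instead just take $q(a,b,c)=(\bar b\cdot a)\circ(b\cdot c)$, check $q$ satisfies \ref{C1}--\ref{C4} in the Boolean algebra (routine lattice computations: \ref{C1} is $(\bar a\wedge 0)\vee(a\wedge 1)=a$; \ref{C2} is $(\bar b\wedge a)\vee(b\wedge a)=a$; \ref{C4} is $(1\wedge a)\vee(0\wedge b)=a$ and $(0\wedge b)\vee(1\wedge a)=a$; \ref{C3} is the median-type distributivity which holds in any distributive lattice), and invoke that \ref{C1}--\ref{C4} have a unique model-expansion, i.e. $p$ is a term in $(\circ,\cdot,\bar{()},0,1)$ — this last uniqueness claim is itself the thing requiring proof and is the real obstacle (see below).

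For $\ref{T2}\Rightarrow\ref{T3}$ and $\ref{T2}\Rightarrow\ref{T4}$: substitute directly. $p(a,a,b)=(\bar a\cdot a)\circ(a\cdot b)$; I need $\bar a\cdot a=0$, which by (L8) follows once I know $a\cdot a=a$, or by (L7) once I know $p(0,x,y)=p(x,x,y)$ — but \ref{T3} is precisely $p(a,a,b)=a\cdot b=p(0,a,b)$, so there is a small bootstrapping issue; I would instead get $\bar a \cdot a=0$ from \ref{T2} by computing $p(a,a,\bar a)=(\bar a\cdot a)\circ(a\cdot\bar a)$ and comparing with \ref{C2}, $p(a,a,\bar a)=a$... hmm, that gives $a = (\bar a\cdot a)\circ(a\cdot\bar a)$. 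Better: from \ref{T2}, $p(1,a,0)=(\bar a\cdot 1)\circ(a\cdot 0)=\bar a\circ 0=\bar a$, consistent; and $a\cdot\bar a = p(0,a,\bar a)$ — using \ref{T2} with $(a,b,c)=(0,a,\bar a)$: $p(0,a,\bar a)=(\bar a\cdot 0)\circ(a\cdot\bar a)=0\circ(a\cdot\bar a)=a\cdot\bar a$, tautology again. The clean move is: \ref{T2} $\Rightarrow$ $p(0,a,b)=(\bar a\cdot 0)\circ(a\cdot b)=a\cdot b$ and $p(a,a,b)=(\bar a\cdot a)\circ(a\cdot b)$; now $p(b,a,a)=(\bar a\cdot b)\circ(a\cdot a)$ and by \ref{C2} the left side with $b=a$ is $a$, but more usefully $p(a,a,a)=a$ gives $(\bar a\cdot a)\circ(a\cdot a)=a$, so $a\cdot a\le a\le a\cdot a\circ(\text{stuff})$... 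I will just get idempotency and $\bar a\cdot a=0$ together: from \ref{T2}, $(A,\circ,\cdot,\bar{()},0,1)$ built from $p$ must — actually the slick argument is that \ref{T2} lets us compute $p$ on all triples in terms of $\circ,\cdot$, and then \ref{C1}--\ref{C4} become identities in $(\circ,\cdot,\bar{()})$ that (together with (L1)--(L6)) force the Boolean algebra axioms; this simultaneously yields $a\cdot a = a$ etc., after which \ref{T3} and \ref{T4} are immediate and (L3) gives the $\circ$ version by De Morgan. And \ref{T3}$\Rightarrow$\ref{T4}: from $p(a,a,b)=a\cdot b$ and (L2) $p(a,b,c)=\overline{p(\bar a,b,\bar c)}$ combined with $\overline{p(\bar a,b,c)}=p(a,b,\bar c)$... specifically $p(a,b,b)=\overline{p(\bar a,b,\bar b)}$? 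No: (L2) says $\overline{p(a,b,c)}=p(\bar a,b,\bar c)$, so $p(a,b,b)=\overline{p(\bar a,b,\bar b)}$ requires $\bar b$ in both outer slots — but I have $p$ with equal outer arguments $\bar b,\bar b$ only if the original outer arguments are equal, they're $a,b$. Use instead (L2)'s second part $p(a,b,c)=p(c,\bar b,a)$: then $p(a,b,b)=p(b,\bar b,a)$, still not matching. The right identity: $p(a,b,b)$, apply $\overline{(\cdot)}$: $\overline{p(a,b,b)}=p(\bar a,b,\bar b)$; set $a':=\bar a$: this is $p(a',b,\bar b)$ with outer args $a',\bar b$ — to use \ref{T3} I want outer args equal. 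So instead derive \ref{T4} from \ref{T2} directly like \ref{T3}, or note \ref{T3}$\Leftrightarrow$\ref{T4} via the duality $p\mapsto \overline{p(\bar\cdot,\cdot,\bar\cdot)}$ isn't clean; I will prove \ref{T4} from \ref{T2} in parallel with \ref{T3}.

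For the closing implication $\ref{T4}\Rightarrow\ref{T1}$ (or $\ref{T3}\Rightarrow\ref{T1}$): from $p(a,b,b)=a\circ b$ I would show $\circ$ is idempotent ($a\circ a = p(a,a,a)=a$ by \ref{C2}) and commutative, then invoke (L9): once $(A,\circ,0)$ is a commutative idempotent monoid, (L9) gives that $(A,\circ,\cdot,0,1)$ is a bounded distributive lattice; complementation $\bar a$ with $a\circ\bar a=1$, $a\cdot\bar a=0$ comes from (L7) after checking its hypothesis $p(0,a,b)=p(a,a,b)$, which is exactly \ref{T3}, and \ref{T3}$\Leftrightarrow$\ref{T4} will have been established; a complemented bounded distributive lattice is a Boolean algebra. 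Commutativity of $\circ$ is the one genuine computation: from \ref{T4}, $a\circ b = p(a,b,b)$ and $b\circ a = p(b,a,a)$, and I'd connect them through \ref{C3} and \ref{C2} — e.g. expand $p(a, p(b,b,b)\text{ or }p(0,b,1), ?)$, using $b=p(0,b,1)$ (axiom \ref{C1}) inside \ref{C3}: $p(a,b,c)=p(a,p(0,b,1),c)=p(p(a,0,c),b,p(a,1,c))=p(a,b,c)$ — tautological, unhelpful; the productive substitution is $a=p(a,a,a)$ or feeding a nontrivial DNF.

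Honestly, the main obstacle across the whole theorem is exactly this: \emph{extracting commutativity / the lattice structure from the one-sided Church formula}, i.e. showing that the seemingly non-symmetric $p(a,b,c)=(\bar b\cdot a)\circ(b\cdot c)$ together with \ref{C1}--\ref{C4} forces the symmetry inherent in a Boolean algebra. I expect the decisive lemma is: \emph{under \ref{C1}--\ref{C4}, condition \ref{T3} (equivalently \ref{T4} or \ref{T2}) implies $p(0,x,y)=p(x,x,y)$}, after which (L7), (L8), (L9) do the heavy lifting almost for free, and $\ref{T0}$ (Boolean ring) drops out via (L3)'s $\overline{a+b}=\bar a + b = a+\bar b$ together with (L10) ($a+a=0\Rightarrow$ $+$ commutative and left-distributive) once idempotency $a\cdot a=a$ is known. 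I will therefore structure the proof to reach \ref{T3} as the hub, prove \ref{T3}$\Leftrightarrow$\ref{T4} and \ref{T3}$\Rightarrow$ (hypothesis of (L7)) first, then cash in (L7)--(L10) for \ref{T1} and \ref{T0}, and finally close \ref{T1}$\Rightarrow$\ref{T2} by the uniqueness-of-term-operation argument, leaving \ref{T2}$\Rightarrow$\ref{T3} as a one-line substitution.
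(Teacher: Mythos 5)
Your overall architecture is sound, and for the hub implication it coincides with the paper's: you correctly observe that condition \ref{T3} is \emph{literally} the hypothesis $p(0,a,b)=p(a,a,b)$ of \ref{P3}, so that \ref{P3}, \ref{P1}, \ref{P2} and \ref{monoids} deliver the Boolean algebra of \ref{T1} with no further computation (in particular the commutativity of $\circ$ that you worry about is already part of \ref{P3}); the equivalence \ref{T1}$\Leftrightarrow$\ref{T0} is indeed treated as standard. However, two of the implications your cycle needs are left genuinely open. For \ref{T1}$\Rightarrow$\ref{T2} you reduce everything to a ``uniqueness of the term operation'' claim and then concede that this claim is ``the real obstacle'' without resolving it, so the implication out of \ref{T1} is missing and the cycle does not close. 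The paper's device, which you did not find, is elementary: in a distributive lattice $x\cdot c=y\cdot c$ and $c\circ x=c\circ y$ together force $x=y$, so it suffices to exhibit $p(a,b,c)$ and $(\bar b\cdot a)\circ(b\cdot c)$ as solutions of the same pair of equations $z\cdot c=((\bar b\cdot a)\circ(b\cdot c))\cdot c$ and $c\circ z=c\circ((\bar b\cdot a)\circ(b\cdot c))$, which is a finite verification from \ref{C3}, \ref{C4} and the lattice identities.

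The second gap is \ref{T2}$\Rightarrow$\ref{T3}: all the triples you try ($p(1,a,0)$, $p(a,a,\bar a)$, $p(0,a,\bar a)$, $p(a,a,a)$) return tautologies or unusable identities, and you never land on the one that works, namely $1=p(1,a,1)=(\bar a\cdot 1)\circ(a\cdot 1)=\bar a\circ a$ by \ref{C2} and \ref{monoids}, whence $\bar a\cdot a=0$ by the De Morgan duality \ref{P0}, and then $p(a,a,b)=(\bar a\cdot a)\circ(a\cdot b)=0\circ(a\cdot b)=a\cdot b$. Likewise the duality \ref{T3}$\Leftrightarrow$\ref{T4} that you abandon does go through if you compose \emph{both} identities of (L2): $\overline{p(a,b,b)}=p(\bar a,b,\bar b)=p(\bar b,\bar b,\bar a)=\bar b\cdot\bar a$, and \ref{P0} converts this to $p(a,b,b)=a\circ b$. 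In short, your skeleton is the right one, but the three steps you yourself flag as obstacles are exactly the ones that remain unproved, and one of them (\ref{T1}$\Rightarrow$\ref{T2}) carries the real content of the theorem.
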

\begin{proof}

\noindent Conditions \ref{T3} and \ref{T4} are equivalent by duality~\ref{P0}. Condition~\ref{T3} implies the commutativity of $\cdot$ and $\circ$  as well as the fact that $\bar{()}$ is a Boolean complement \ref{P3}, which in turn implies idempotency \ref{P1} and distributivity~\ref{P2}. Associativity is ensured by \ref{monoids} and hence \ref{T3} implies~\ref{T1}.
Condition~\ref{T2} follows from \ref{T1} as the unique solution $p=p(a,b,c)$ to the system of equations
$$\left\{\begin{array}{rcl}
p\cdot c&=&((\bar{b}\cdot a)\circ(b\cdot c))\cdot c\\
c\circ p&=&c\circ ((\bar{b}\cdot a)\circ(b\cdot c))
\end{array}\right..$$
Condition \ref{T3} follows from \ref{T2}. Indeed, \ref{T2} implies $1=p(1,a,1)=\bar{a}\circ a$ and (by duality) also $\bar{a}\cdot a=0$. Moreover,
$p(a,a,b)=(\bar{a}\cdot a)\circ (a\cdot b)=a\cdot b$. It is readily checked that \ref{T1} and \ref{T0} are equivalent \cite{preprint-TBA}.
\end{proof}
Note that the ternary Mal'tsev operation~\cite{Maltsev} in a group $(ab^{-1}c)$ admits a similar characterisation (see Theorem 4 in \cite{Certaine}). However, as pointed out by Birkhoff and Kiss \cite{Birk}, the two operations are quite different: in a group we have $p(a,b,b)=a=p(b,b,a)$ whereas in a Boolean algebra we have $p(a,b,a)=a$. Nevertheless, as shown in~\cite{NMF2012} there are some touching points between (weakly) Mal’tsev categories and distributive lattices. 

Each new interpretation of the ternary operation $p$ satisfying \mbox{\ref{C1}--\ref{C4}} in terms of its derived operations is equivalent to adding a new axiom and gives rise to a new subvariety, as Theorem \ref{thm:2} and Theorem \ref{thm:3} illustrate. An example of a ternary operation obtained from a unitary Abelian near-ring~\cite{near-rings-2021}, which is not necessarily determined by its derived operations, is presented in the next proposition. The algebraic model of the unit interval considered in \mbox{\cite{ccm_magmas, mobi}} is another example.
\begin{proposition}\label{prop}
If $(A,+,\cdot,0,1)$ is a unitary Abelian (right) near-ring, in which $a\cdot 0=0$, then $(A,p,0,1)$ with $p(a,b,c)=a+b(c-a)$ satisfies the axioms \ref{C1} to \ref{C4}.
\end{proposition}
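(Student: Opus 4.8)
The plan is to substitute $p(a,b,c)=a+b(c-a)$ directly into each of \ref{C1}--\ref{C4} and reduce, using only the structure actually available in a unitary Abelian right near-ring: associativity and commutativity of $+$, associativity of $\cdot$, the unit laws $1\cdot a=a=a\cdot 1$, right distributivity $(a+b)c=ac+bc$, and the standing hypothesis $a\cdot 0=0$. From right distributivity one also obtains $0\cdot a=0$ (since $0\cdot a=(0+0)a=0\cdot a+0\cdot a$) and $(-b)c=-(bc)$ (since $0=0\cdot c=(b+(-b))c=bc+(-b)c$), and I will use both. It is worth stressing at the outset that left distributivity is never invoked — that is precisely the delicate point.

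The first, second and fourth axioms are immediate. For \ref{C1}: $p(0,a,1)=0+a(1-0)=a\cdot 1=a$. For \ref{C2}: $p(a,b,a)=a+b(a-a)=a+b\cdot 0=a+0=a$; this is the only place the hypothesis $a\cdot 0=0$ is needed. For \ref{C4}: $p(a,0,b)=a+0\cdot(b-a)=a+0=a$ and $p(b,1,a)=b+1\cdot(a-b)=b+(a-b)=a$.

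The substantive case is the Menger-type identity \ref{C3}. Setting $d=c-a$, the left-hand side becomes, after one application of right distributivity followed by associativity of $\cdot$,
\[
p\big(a,\,p(b_1,b_2,b_3),\,c\big)=a+\big(b_1+b_2(b_3-b_1)\big)d=a+b_1 d+b_2\big((b_3-b_1)d\big).
\]
On the right-hand side, write $u=p(a,b_1,c)=a+b_1 d$ and $v=p(a,b_3,c)=a+b_3 d$; then $v-u=b_3 d-b_1 d=(b_3-b_1)d$ by commutativity of $+$ together with $(-b_1)d=-(b_1 d)$, so
\[
p(u,b_2,v)=u+b_2(v-u)=a+b_1 d+b_2\big((b_3-b_1)d\big),
\]
which coincides with the left-hand side. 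The only real obstacle here is bookkeeping discipline: one must keep the product $b_2\big((b_3-b_1)d\big)$ intact and resist expanding $b_2$ across the inner difference, since that would require the missing left distributivity. Organising both computations around the single product $(b_3-b_1)d$ makes the identity transparent.
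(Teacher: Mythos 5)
Your proof is correct and is exactly the direct verification the paper has in mind when it declares the proof "straightforward": substitute the formula into \ref{C1}--\ref{C4}, use only right distributivity (plus the assumed $a\cdot 0=0$ and the derived facts $0\cdot a=0$ and $(-b)c=-(bc)$), and organise \ref{C3} around the single product $(b_3-b_1)(c-a)$ so that left distributivity is never needed. Nothing is missing.
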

\begin{proof} The proof is straightforward.\end{proof}

In Theorem \ref{thm:2} below, the formula for $p$ is obtained by writing $a+b(c-a)$ as $(1-b)a+bc$ and replacing $1-b$ by $1+b$. Recall that a ring (or a near-ring) of characteristic 2 is such that $b+b=0$ for all $b$, so that $1-b=1+b$. Moreover, rewriting $(1+b)a+bc$ as $a+b(a+c)$ gives the formula used in Theorem \ref{thm:3}.
\begin{theorem}\label{thm:2}
Suppose that $(A,p,0,1)$ satisfies axioms \ref{C1} to \ref{C4}. For $\bar{a}=p(1,a,0)$, $ a\cdot b=p(0,a,b)$ and $a+b=p(a,b,\bar a)$, the following conditions are equivalent:
\begin{enumerate}[label={\rm(\roman*)}]
\item\label{T1a} $(A,+,\cdot,0,1)$ is a unitary ring of characteristic $2$
\item\label{T2a} $p(a,b,c)=(\bar b\cdot a) + (b\cdot c)$
\item\label{T3a} $a\cdot(b+c)=(a\cdot b)+(a\cdot c)$.
\end{enumerate}
\end{theorem}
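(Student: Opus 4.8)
The plan is to prove $\ref{T1a}\Rightarrow\ref{T3a}$, $\ref{T3a}\Rightarrow\ref{T1a}$, $\ref{T2a}\Rightarrow\ref{T1a}$ and $\ref{T1a}\Rightarrow\ref{T2a}$, working throughout with the derived-operation calculus of Lemma~\ref{lemma:1}. The implication $\ref{T1a}\Rightarrow\ref{T3a}$ is immediate, since left distributivity is part of the definition of a ring.

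The engine for the reverse implications is the passage through characteristic $2$. For $\ref{T3a}\Rightarrow\ref{T1a}$ I would first observe that \ref{P4-0} gives $1+1=0$ and \ref{absorb} gives $a\cdot 0=0$, so \ref{T3a} forces $a+a=a\cdot 1+a\cdot 1=a\cdot(1+1)=a\cdot 0=0$. Once $a+a=0$, \ref{P4} supplies both the commutativity of $+$ and the right distributive law $(a+b)\cdot c=a\cdot c+b\cdot c$; combined with \ref{monoids} (so $(A,+,0)$ is a monoid, hence an abelian group as each element is its own inverse, and $(A,\cdot,1)$ is a monoid) and with \ref{absorb}, this is precisely a unitary ring of characteristic $2$, that is \ref{T1a}. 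For $\ref{T2a}\Rightarrow\ref{T1a}$ the same route applies once characteristic $2$ is extracted from \ref{T2a}: instantiating \ref{C2} with \ref{T2a} gives $a=p(a,b,a)=(\bar b\cdot a)+(b\cdot a)$, which at $a=1$ reads $\bar b+b=1$; applying $\bar{()}$ and using \ref{P0} together with (L1) rewrites this as $b+b=0$. Then \ref{P4}, \ref{monoids} and \ref{absorb} yield everything \emph{except} left distributivity, which must be recovered by feeding \ref{T2a} into \ref{C3} and simplifying. (Here it helps to note that $p(a,b,0)=\bar b\cdot a$ by the identity $p(a,b,c)=p(c,\bar b,a)$ of (L2), and $p(0,b,c)=b\cdot c$ by definition, so that \ref{T2a} amounts to the additivity $p(a,b,c)=p(a,b,0)+p(0,b,c)$ of $p$ in its outer arguments.)

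For $\ref{T1a}\Rightarrow\ref{T2a}$ I would start from the remark that, in a unitary ring of characteristic $2$, one has $c-a=c+a$ and $a\cdot 0=0$, so Proposition~\ref{prop} produces the operation $q(a,b,c)=a+b(c+a)$ satisfying \ref{C1}--\ref{C4}; in the ring, $q(a,b,c)=a+ba+bc=(\bar b\cdot a)+(b\cdot c)$, and one checks directly that $q$ has the same involution, product and sum as $p$. It then remains to identify $p$ with $q$, which I would do in the spirit of Theorem~\ref{thm:1}: realise $p(a,b,c)$ as the solution of a suitable pair of equations — one of them obtained from \ref{C3} with first entry $0$, namely $p(a,b,c)\cdot d=p(a\cdot d,\,b,\,c\cdot d)$ — and verify that, in a ring with left distributivity, such a system has $q(a,b,c)$ as its unique solution.

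The step I expect to be the real obstacle is the derivation of \emph{left} distributivity in $\ref{T2a}\Rightarrow\ref{T1a}$ — and, dually, the determination of $p$ in $\ref{T1a}\Rightarrow\ref{T2a}$. The difficulty is that \ref{C1}, \ref{C3}, \ref{C4} are self-reproducing: expanding the middle entry of \ref{C3} as $p(0,b,1)$ and using \ref{C1}, \ref{C4} returns only the tautology $p(a,b,c)=p(a,b,c)$, and the same occurs in any attempt to unfold $p(a,b,c)$ through the constants $0$ and $1$ alone. So the distributive law can surface only after the explicit formula \ref{T2a} is substituted into \ref{C3}; the resulting identity, once the already established commutativity and right distributivity of $+$ and the associativity of $\cdot$ are used, has to be shown to collapse all the way down to $a\cdot(b+c)=a\cdot b+a\cdot c$, rather than to a strictly weaker fragment such as $a\cdot(1+b)=a+a\cdot b$. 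Making that collapse work is the delicate part; the remainder is routine bookkeeping with Lemma~\ref{lemma:1}.
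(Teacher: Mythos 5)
Your reduction of everything to characteristic $2$ is sound and matches the paper: \ref{T1a}$\Rightarrow$\ref{T3a} is trivial; \ref{T3a} forces $a+a=a\cdot(1+1)=a\cdot 0=0$ and then \ref{monoids} and \ref{P4} give \ref{T1a}; and your extraction of $a+a=0$ from \ref{T2a} via $p(1,b,1)=\bar b+b=1$ is essentially the paper's computation. But the two implications you yourself flag as ``the real obstacle'' --- left distributivity from \ref{T2a}, and the formula \ref{T2a} from \ref{T1a} --- are exactly where your plan has a genuine gap, and the strategies you sketch for them are not the ones that work. The missing idea is a single well-chosen instance of \ref{C3}: once $a+a=0$, apply \ref{C3} to $p(a,p(a,b,c),\bar a)$ to get
\begin{equation*}
a+p(a,b,c)=p\bigl(p(a,a,\bar a),b,p(a,c,\bar a)\bigr)=p(0,b,a+c)=b\cdot(a+c),
\end{equation*}
since $p(a,a,\bar a)=a+a=0$ and $p(a,c,\bar a)=a+c$ by definition of $+$. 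This identity does all the remaining work in both directions. For \ref{T3a}$\Rightarrow$\ref{T2a}: $p(a,b,c)=a+b\cdot(a+c)=a+ba+bc=(1+b)a+bc=\bar b\cdot a+b\cdot c$, using \ref{T3a}, right distributivity from \ref{P4}, and $\bar b=1+b$ from \ref{P4-0}. For \ref{T2a}$\Rightarrow$\ref{T3a}: $a\cdot(b+c)=b+p(b,a,c)=b+\bar a b+ac=b+b+ab+ac=ab+ac$.

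Your proposed alternatives would not have closed the gap. For left distributivity you suggest ``feeding \ref{T2a} into \ref{C3} and simplifying''; the productive move is not to substitute the formula into both sides of a generic instance of \ref{C3}, but to pick the specific instance above, which already expresses $b\cdot(a+c)$ in terms of $p$ \emph{before} any formula is substituted. For \ref{T1a}$\Rightarrow$\ref{T2a} your plan is to build $q(a,b,c)=a+b(c+a)$ from Proposition~\ref{prop} and then argue that $p=q$ by uniqueness of the solution of a system of equations; but the only equation you name, $p(a,b,c)\cdot d=p(a\cdot d,b,c\cdot d)$, is satisfied by any operation obeying \ref{C3} and collapses to a tautology at $d=1$, and the paper explicitly warns (after Proposition~\ref{prop}) that such ternary operations need not be determined by their derived operations, so there is no a priori uniqueness to appeal to. The paper avoids this entirely by routing \ref{T1a}$\Rightarrow$\ref{T3a}$\Rightarrow$\ref{T2a} through the displayed identity.
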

\begin{proof}
It is clear that \ref{T1a} implies \ref{T3a}. By \ref{P4-0}, \ref{T3a} implies $a+a=0$ and hence, considering \ref{monoids} and \ref{P4}, \ref{T3a} implies~\ref{T1a}. Moreover, when~$a+a=0$:
\begin{equation}\label{magic}
a+p(a,b,c)=p(a,p(a,b,c),\bar a)\ =\ p(p(a,a,\bar a),b,p(a,c,\bar a))=b\cdot(a+c).
\end{equation}
Consequently, \ref{T3a} implies \ref{T2a} as $p(a,b,c)=a+ba+bc=(1+b)a+bc.$
It remains to prove that \ref{T2a} implies~\ref{T3a}. By \ref{C2}, \ref{monoids} and \ref{P4-0}, \ref{T2a} implies $1=p(1,a,1)=\bar a+a=1+a+a$ i.e. $a+a=0$. Then (\ref{magic}) and \ref{P4} imply left distributivity:
$
a\cdot(b+c)=b+p(b,a,c)=b+(1+a)b+ac=ab+ac.
$
\end{proof}
\noindent The unique non-commutative ring of order $8$, say consisting of all upper triangular binary  $2$-by-$2$ matrices, illustrates Theorem \ref{thm:2}. Note that addition is the Boolean symmetric difference as in a Boolean ring.

\begin{theorem}\label{thm:3}
Suppose that $(A,p,0,1)$ satisfies axioms \ref{C1} to \ref{C4}. For $\bar{a}=p(1,a,0)$, $ a\cdot b=p(0,a,b)$ and $a+b=p(a,b,\bar a)$, the following conditions are equivalent:
\begin{enumerate}[label={\rm(\roman*)}]
\item\label{T1b} $(A,+,\cdot,0,1)$ is a unitary (right) near ring of characteristic~$2$
\item\label{T2b} $p(a,b,c)=a+(b\cdot(a+c))$
\item\label{T3b} $a+a=0$
\item\label{T4b} $(a+b)\cdot c=(a\cdot c)+(b\cdot c)$.
\end{enumerate}
\end{theorem}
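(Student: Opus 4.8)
The plan is to pivot the whole argument on condition~\ref{T3b}, the characteristic-$2$ identity $a+a=0$, which is the most tractable, and to prove separately that each of \ref{T1b}, \ref{T2b} and \ref{T4b} is equivalent to it. The equivalence \ref{T1b}$\Leftrightarrow$\ref{T3b} is essentially structural and leans entirely on Lemma~\ref{lemma:1}: ``characteristic $2$'' means precisely $a+a=0$, so \ref{T1b}$\Rightarrow$\ref{T3b} is immediate; conversely, if $a+a=0$, then $(A,\cdot,1)$ and $(A,+,0)$ are monoids by~\ref{monoids}, $+$ is commutative by~\ref{P4}, so $(A,+,0)$ is in fact a group (every element being its own inverse), and the distributivity clause of~\ref{P4} supplies the right distributive law $(a+b)\cdot c=a\cdot c+b\cdot c$; hence $(A,+,\cdot,0,1)$ is a unitary right near-ring of characteristic $2$. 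That same clause of~\ref{P4} gives \ref{T3b}$\Rightarrow$\ref{T4b}, since \ref{T4b} \emph{is} that distributive law.

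For the reverse implication \ref{T4b}$\Rightarrow$\ref{T3b} I would simply set $a=b=1$ in $(a+b)\cdot c=a\cdot c+b\cdot c$: by~\ref{P4-0} the left-hand side is $(1+1)\cdot c=0\cdot c$, which is $0$ by~\ref{absorb}, while by~\ref{monoids} the right-hand side is $c+c$; hence $c+c=0$. Together with the previous paragraph this settles \ref{T1b}$\Leftrightarrow$\ref{T3b}$\Leftrightarrow$\ref{T4b}.

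It remains to fit \ref{T2b} into the picture. For \ref{T3b}$\Rightarrow$\ref{T2b} I would reuse the identity $a+p(a,b,c)=b\cdot(a+c)$ already derived in~(\ref{magic}) under the hypothesis $a+a=0$; adding $a$ on the left of both sides and using $a+a=0$ together with associativity of $+$ (from~\ref{monoids}) collapses the left-hand side to $p(a,b,c)$, yielding $p(a,b,c)=a+b\cdot(a+c)$, i.e. \ref{T2b}. For the converse \ref{T2b}$\Rightarrow$\ref{T3b} I would combine \ref{T2b} with the reflection identity $p(a,b,c)=p(c,\bar b,a)$ of property (L2): substituting the formula on both sides gives $a+b\cdot(a+c)=c+\bar b\cdot(c+a)$ for all $a,b,c$; putting $c=0$ and using the monoid laws from~\ref{monoids} reduces this to $a+b\cdot a=\bar b\cdot a$, and then $b=1$ together with $\bar 1=0$ (property (L1)), $1\cdot a=a$ (\ref{monoids}) and $0\cdot a=0$ (\ref{absorb}) gives $a+a=0$.

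Overall the proof is light: once Lemma~\ref{lemma:1} and the identity~(\ref{magic}) from the proof of Theorem~\ref{thm:2} are in hand, every implication is obtained by substituting constants into a known identity. The only step that calls for a specific idea rather than bookkeeping is \ref{T2b}$\Rightarrow$\ref{T3b}, where one must think to feed the candidate formula for $p$ into the $\bar{()}$-symmetry (L2) and then specialise at $c=0$ and $b=1$; I expect this to be the main (and modest) obstacle.
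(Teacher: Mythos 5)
Your proof is correct and follows essentially the same route as the paper: everything pivots on \ref{T3b}, the equivalences with \ref{T1b} and \ref{T4b} come from \ref{monoids}, \ref{absorb}, \ref{P4-0} and \ref{P4}, and \ref{T3b}$\Rightarrow$\ref{T2b} reuses the identity (\ref{magic}). The only divergence is \ref{T2b}$\Rightarrow$\ref{T3b}, where the paper simply evaluates $p(a,1,0)$, which equals $0$ by axiom \ref{C4} and equals $a+(1\cdot(a+0))=a+a$ by the formula, whereas you detour through the reflection identity of (L2) and specialise at $c=0$, $b=1$; both arguments are valid, the paper's being a one-liner.
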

\begin{proof}
It is clear that \ref{T1b} implies \ref{T3b} and, considering \ref{monoids} and \ref{P4}, \ref{T3b} implies~\ref{T1b}. Using (\ref{magic}), condition \ref{T3b} implies \ref{T2b}:
\[
a+p(a,b,c)=b\cdot(a+c)=a+(a+b\cdot(a+c)).
\]
Using \ref {monoids} and \ref{C4}, condition \ref{T2b} implies \ref{T3b}: $0=p(a,1,0)=a+a.$
Properties \ref{P4-0} and \ref{P4} show that \ref{T3b} and \ref{T4b} are equivalent.
\end{proof}
\noindent Several examples of unitary (right) near-rings of characteristic 2 with four elements can be found. The following example illustrates Theorem \ref{thm:3}. Multiplication is neither commutative nor idempotent and, once again, addition is the same as Boolean symmetric difference. Note that if the formula $(1+y)x+yz$ is used as the ternary operation $p$ instead of $x+y(x+z)$ then \ref{C3} is not satisfied.
\[
\begin{array}{c|cccc}
\cdot & 0 & u & v & 1 \\ 
\hline 
0  & 0 & 0 & 0 & 0 \\ 
u & 0 & 0 & 0 & u \\ 
v & 0 & u & v & v \\ 
1 & 0 & u & v & 1 \\ 
\end{array} \qquad\quad
\begin{array}{c|cccc}
+ & 0 & u & v & 1 \\ 
\hline 
0  & 0 & u & v & 1 \\ 
u  & u & 0 & 1 & v \\ 
v  & v & 1 & 0 & u \\ 
1  & 1 & v & u & 0 
\end{array}
\] 

The results presented in this note suggest that unitary Abelian near-rings of characteristic~$2$, considered as generalised Boolean rings, provide a way to explore skew lattices that are not necessarily idempotent.
This line of research goes in the direction advocated by Birkoff and Von Neumann~\cite{Birk-Neumann} as a possibility for the logic of quantum mechanics.

\end{document}